\DeclareMathOperator{\prob}{P}
\DeclareMathOperator{\E}{E} 
\DeclareMathOperator{\Var}{Var}
\DeclareMathOperator{\Cov}{Cov}
\DeclareMathOperator{\sign}{sign}
\DeclareMathOperator{\Sd}{sd}
\DeclareMathOperator{\tr}{tr}
\newcommand{\dd}[1]{\mathrm{d}#1}
\newcommand\pr[1]{\prob\mleft(\,#1\,\mright)}
\newcommand\ex[2][]{\E_{#1}\mleft[\,#2\,\mright]} 
\newcommand\var[1]{\Var\mleft[\,#1\,\mright]}
\newcommand\sd[1]{\Sd\mleft[\,#1\,\mright]}
\newcommand{\vertiii}[1]{{\left\vert\kern-0.15ex\left\vert\kern-0.15ex\left\vert #1 \right\vert\kern-0.15ex\right\vert\kern-0.15ex\right\vert}}
\newcommand{\bs}[1]{\boldsymbol{#1}}
\newcommand{\sigmamin}{\sigma_{\min}}
\newtheoremstyle{spaced}
  {5pt}   
  {5pt}   
  {\itshape}  
  {}       
  {\bfseries} 
  {.}      
  {1em}    
  {}       
\declaretheorem[name=Theorem,style=spaced,numberwithin=section]{theorem}
\declaretheorem[name=Lemma,style=spaced,sibling=theorem]{lemma} 
\newcommand\bfX{\mathbf{X}}
\begin{document}

\title{On the distance between mean and geometric median in high dimensions}

\author*[1]{\fnm{Richard} \sur{Schwank}}\email{richard.schwank@tum.de}

\author[1,2]{\fnm{Mathias} \sur{Drton}}\email{mathias.drton@tum.de}

\affil*[1]{\emph{\orgdiv{School of Computation, Information and Technology}}, \orgname{Technical University of Munich},
\country{Germany}}

\affil[2]{\emph{\orgdiv{Munich Center for Machine Learning}}, \city{Munich}, \country{Germany}}

\abstract{
  The geometric median, a notion of ``center'' for multivariate distributions, has gained recent attention in robust statistics and machine learning. Although conceptually distinct from the mean (i.e., expectation), we demonstrate that both are very close in high dimensions when the dependence between the distribution components is suitably controlled. Concretely, we find an upper bound on the distance that vanishes with the dimension asymptotically, and derive a rate-matching first order expansion of the geometric median components. Simulations illustrate and confirm our results.
}

\keywords{spatial median, robust mean estimation}

\maketitle

\doparttoc 
\faketableofcontents 
\part{} 
\vspace{-2.3cm} 

\section{Introduction}
The geometric median 
provides a robust alternative to the sample mean, mitigaging the impact of outliers. The improved robustness comes at the cost of a bias: in the infinite-data limit, the geometric median is generally distinct from the mean (i.e., expectation) of the data generating distribution, which often is the learning target in practice\footnote{For special distributions like the Gaussian, geometric median and mean can coincide. 
}. A method to balance robustness and bias is the median-of-means \citep{Lugosi2019, Minsker2015} or ``bucketing"  approach \citep{Guerraoui2024}.

In this paper, we take a step back and quantify the actual bias of the geometric median, focusing on high-dimensional distributions. We prove, when each distribution component only depends on a fixed number of neighboring components, that the maximum-norm distance between geometric median and mean vector is of order $\mathcal{O}(\tfrac{1}{p})$, where $p$ denotes the dimension. Furthermore, we derive an explicit expansion of the geometric median components when $p$ tends to infinity, in particular showing $\mathcal{O}(\tfrac{1}{p})$ is sharp in general.

We discuss connections to the literature and implications of our results at the end of the paper. Proofs are deferred to the appendix.

\section{Background and notation}
Let $\bfX:=(X_1,\ldots, X_p)\in\mathbb{R}^p$ be a $p$-variate random vector, and $\mathbf{x}_1,\ldots,\mathbf{x}_n$ be independent samples from $\bfX$. We denote the Euclidean norm by $\|\cdot\|$. Consider the two objective functions \begin{align*}
    D(\mathbf{m}):=\ex{\|\bfX - \mathbf{m}\|} && \text{and} && \hat{D}(\mathbf{m}):=\frac{1}{n}\sum_{i=1}^n \|\mathbf{x}_i - \mathbf{m}\|.
\end{align*} Any vector $\mathbf{m}\in\mathbb{R}^p$ minimizing $D$ or $\hat{D}$ is called \emph{population} or \emph{sample} geometric median of $\bfX$, respectively. A minimizer exists under mild conditions; in fact, by replacing $D(\mathbf{m})$ with the equivalent objective $\ex{\|\bfX - \mathbf{m}\| - \|\bfX\|}$, moment conditions on $\bfX$ can be avoided. When the minimizer is unique, we write $\mathbf{m}$ for the population and $\hat{\mathbf{m}}$ for the sample geometric median. By translation invariance of $D$, one can shift $\mathbf{m}$ to the origin without loss of generality, however we keep $\mathbf{m}$ as a variable to make the distance between median and mean recognizable in the formulas.

The geometric median $\hat{\mathbf{m}}$ has light tails even under minimal moment assumptions on $\bfX$ \citep{Minsker2015}.  This property makes it popular in heavy-tailed location testing where it can help outperform alternatives like Hotelling's $T^2$ \citep{Moettoenen1995,Cheng2023arxiv}, and more generally in robust statistics and machine learning \citep{Guerraoui2024}.

We write $\bs{\mu}:=\ex{\bfX}$ for the population mean and $\sd{X_i}$ and $\var{X_i}$ for the standard deviation and variance of component $X_i$.  For $\mathbf{v}\in\mathbb{R}^p$, we let $\|\mathbf{v}\|_\infty := \max(|v_1|,\ldots, |v_p|)$. We use the Landau symbols $\mathcal{O}(\cdot)$ and $o(\cdot)$.

\section{Theoretical results}

This section examines the distance between mean and geometric median and obtains the results highlighted in Table~\ref{tab:overview}. We begin with our distributional assumptions. 

Recall that a sequence of random variables $(X_i)_{i\in\mathbb{N}}$ is $M$-dependent if $(X_i)_{i=1}^k$ and $(X_i)_{i=k+M+1}^{\infty}$ are independent for all $k\in\mathbb{N}$.  $M$-dependence of finite-dimensional random vectors $\bfX=(X_i)_{1\leq i\leq p}$ is defined analogously under restriction to subvectors of $\bfX$.
\begin{restatable}{definition}{DefDistr}\label{DefDistr}
Let $M\ge0$ be an integer, and let $c,\sigmamin>0$, $q\ge2$, and $C\ge1$ be reals.  The distribution of an $M$-dependent  random vector $\bfX=(X_i)_{1\leq i\leq p}$ or an $M$-dependent sequence $(X_i)_{i\in\mathbb{N}}$ belongs to the class $\mathcal{D}(M, q, c, \sigmamin, C)$ if all components $X_i$ admit a Lebesgue density bounded by $c$, with $\ex{|X_i|^q}\leq C$ and $\sd{X_i}\geq \sigmamin>0$.
\end{restatable}
Note that the definition implies that  $|\mu_i|\leq C $ and $ \var{X_i}\leq C$ as $C\ge 1$.

\begin{table}
    \centering
    \begin{tabular}{ccc}
    Result & Integrability & Reference\\\hline
    $\|\mathbf{m} - \bs{\mu}\|_\infty \leq \text{const}$ & $q>2$ & Lemma \ref{lemmaGmedComponentsBounded} \\
    $\|\mathbf{m} - \bs{\mu}\|_\infty = \mathcal{O}(\tfrac{1}{p})$ & $q\geq 3$ & Theorem \ref{theoremAllComponentsBound} \\
    $m_{p,i} = \mu_i - \tfrac{\alpha_i}{p} + o(\tfrac{1}{p})$ & $q > 3$ & Theorem \ref{theoremComponentExpansion}
    \\\hline
    \end{tabular}
    \caption{Theoretical results with required integrability $q$.}
    \label{tab:overview}
\end{table}

Let the $p$-variate random vector $\bfX$ belong to the class $\mathcal{D}\equiv\mathcal{D}(M, q, c, \sigmamin, C)$. Then, $\bfX$ is guaranteed to have a unique population geometric median $\mathbf{m}$ when $p\geq M+2$ (Lemma \ref{lemma_exists_unique_gmed}). We build on the following identity for $\mathbf{m}$, which is obtained by setting derivatives in the geometric median objective to zero:
\begin{equation}\label{eq:basic_equation}
    \mathbf{m} - \bs{\mu} = \ex{\frac{1}{\|\bfX - \mathbf{m}\|}}^{-1}\ex{\frac{\bfX - \bs{\mu}}{\|\bfX - \mathbf{m}\|}}.
\end{equation} Equation \eqref{eq:basic_equation} can be proven rigorously when $p$ is large enough (Lemma \ref{lemma_basic_inequality}).

The guiding intuition for why $\|\mathbf{m} - \bs{\mu}\|$ should be small is that, by $M$-dependence, $X_i-\mu_i$ and $\|\bfX - \mathbf{m}\|$ are nearly independent for large $p$. Were they actually independent, the second expectation in \eqref{eq:basic_equation} would factorize and imply $\mathbf{m} - \bs{\mu} =\mathbf{0}$, since $\ex{X_i - \mu_i}=0$.

Formalizing this approach requires controlling $\ex{\|\bfX - \mathbf{m}\|^{-1}}$, or slight variations introduced later, uniformly in the dimension $p$. To deal with $\mathbf{m}$ on the right hand side of \eqref{eq:basic_equation}, we use the standard a-priori bound $\|\mathbf{m} - \bs{\mu}\|^2 \leq C\,p$ (see Lemma \ref{lemmaInitialGmedMeanBound} or \citealt{Minsker2024GeometricMedian}).

\begin{restatable}{lemma}{lemmaUnifIntegr}\label{lemmaUnifIntegr}
    Let $\mathbf{Y}:= (Y_1,\ldots, Y_p)\in\mathcal{D}(M, q, c, \sigmamin, C)$ with $q>2$, and let $\mathbf{v}\in\mathbb{R}^p$ with $\|\mathbf{v}\|^2 \leq C_2\, p$ for some $C_2<\infty$. When $p\geq \lceil4k\rceil(M+1) + 1$,
    \begin{equation}
         \ex{\left(\frac{1}{\|\mathbf{Y} - \mathbf{v}\|^2}\right)^{k}}\leq \frac{\beta(k, C_2, \mathcal{D})}{\ex{\|\mathbf{Y} - \mathbf{v}\|^2}^k} < \infty
    \end{equation} for any $k>0$ and a constant $\beta$ depending only on $k, C_2$ and the $\mathcal{D}$-parameters.
\end{restatable}
Applying the Cauchy-Schwarz inequality to the second expectation in \eqref{eq:basic_equation} and inserting Lemma \ref{lemmaUnifIntegr} yields the following very useful intermediate bound. 

\begin{restatable}{lemma}{lemmaGmedComponentsBounded}\label{lemmaGmedComponentsBounded}
Let the $p$-variate $\bfX$ belong to $\mathcal{D}(M, q, c, \sigmamin, C)$ with $q>2$. Then, $\|\mathbf{m} -\bs{\mu}\|_\infty \leq B_1$ for a constant $B_1$ depending only on the $\mathcal{D}$-parameters.   
\end{restatable}

For deriving the main results, we measure the distance between each right hand side component of equation \eqref{eq:basic_equation} and zero in a special way. Precisely, let $i\leq p$ be fixed and define $\check{\bfX}:= (X_j)_{\{1\leq j\leq p\, :\, |j-i|>M\}}$. With $\check{\mathbf{m}}$ defined similarly, $\ex{\nicefrac{(X_i - \mu_i)}{\|\check{\bfX} - \check{\mathbf{m}}\|}}=0$ by $M$-dependence. Subtracting this zero from equation \eqref{eq:basic_equation}, we obtain after a bit of algebra that \begin{equation}\label{eq:basic_equation_improved}
    m_{i} - \mu_i = \ex{\frac{1}{\|\bfX - \mathbf{m}\|}}^{-1} \ex{\frac{-(X_i - \mu_i)\sum_{|j - i|\leq M}(X_j - m_{j})^2}{\|\bfX - \mathbf{m}\|\|\check{\bfX} - \check{\mathbf{m}}\|(\|\bfX - \mathbf{m}\| + \|\check{\bfX} - \check{\mathbf{m}}\|)}},
\end{equation}
as proven for $p$ large enough in Lemma \ref{lemmaBasicEqImproved}. The first expectation is $\mathcal{O}(\sqrt{p})$ while the second is $\mathcal{O}(p^{-3/2})$, leading to the first main result:

\begin{restatable}{theorem}{theoremAllComponentsBound}\label{theoremAllComponentsBound}
    Let the $p$-variate $\bfX$ belong to $\mathcal{D}(M, q, c, \sigmamin, C)$ with $q\geq 3$. Then, $\|\mathbf{m}- \bs{\mu}\|_\infty \leq \tfrac{B_3}{p}$ for $B_3$ depending only on the $\mathcal{D}$-parameters.
\end{restatable}

Next, we consider random vectors $\bfX_p\in\mathbb{R}^p$ and their medians $\mathbf{m}_p$ as $p$ tends to infinity. We establish an asymptotic expansion $m_{p,i} = \mu_i - \tfrac{\alpha_i}{p} + o(\tfrac{1}{p})$ under mild assumptions. This shows that the rate obtained in Theorem \ref{theoremAllComponentsBound} is generally sharp, and through explicit formulas for $\alpha_i$ we gain insight into which distributional properties of $\bfX$ influence $m_{p,i} - \mu_i$ most strongly.

We prove that $p\cdot(m_{p,i} - \mu_i)$ converges to a constant $\alpha_i$ as $p\to\infty$. Precisely, the random variables inside the expectations on the right hand side of the scaled equation \eqref{eq:basic_equation_improved} admit a limit in probability, since $m_{p,i}\to \mu_i$ by Theorem \ref{theoremAllComponentsBound}. We transfer this convergence in probability to convergence of the expectations via a uniform integrability argument, requiring $q>3$. 

\begin{restatable}{theorem}{theoremComponentExpansion}\label{theoremComponentExpansion}
    Let $(X_i)_{i\in\mathbb{N}}\in \mathcal{D}(M, q, c, \sigmamin, C)$ with $q > 3$ and assume $\tfrac{1}{p}\sum_{i=1}^p \var{X_i} \to \bar{\sigma}^2$. Fix $i\in\mathbb{N}$. Then, $m_{p,i} = \mu_i -\tfrac{\alpha_i}{p} + o(\tfrac{1}{p})$, where \begin{equation*}
        \alpha_i:=\frac{1}{2\bar{\sigma}^2} \sum_{|j-i|\leq M}\ex{(X_i - \mu_i)(X_j - \mu_j)^2}.
    \end{equation*}
\end{restatable}
The expansion coeffients $\alpha_i$ can be computed analytically in many cases. When $M=0$, i.e., $(X_i)_i$ are independent, the sum reduces to the skewness of $X_i$. Note that while $\sigmamin^2 \leq \tfrac{1}{p}\sum_{i=1}^p \var{X_i} \leq C$ always has a convergent subsequence, there may be multiple accumulation points.  
In this case, the expansion is satisfied on each convergent subsequence with individual expansion coefficients $\alpha_i$.

\section{Simulations}
We simulate $\hat{\mathbf{m}}_p$ for a range of distributions to validate Theorems \ref{theoremAllComponentsBound} and \ref{theoremComponentExpansion}. The simulation details are discussed at the end of this section.

\begin{figure}
    \centering
    \includegraphics[width=\linewidth]{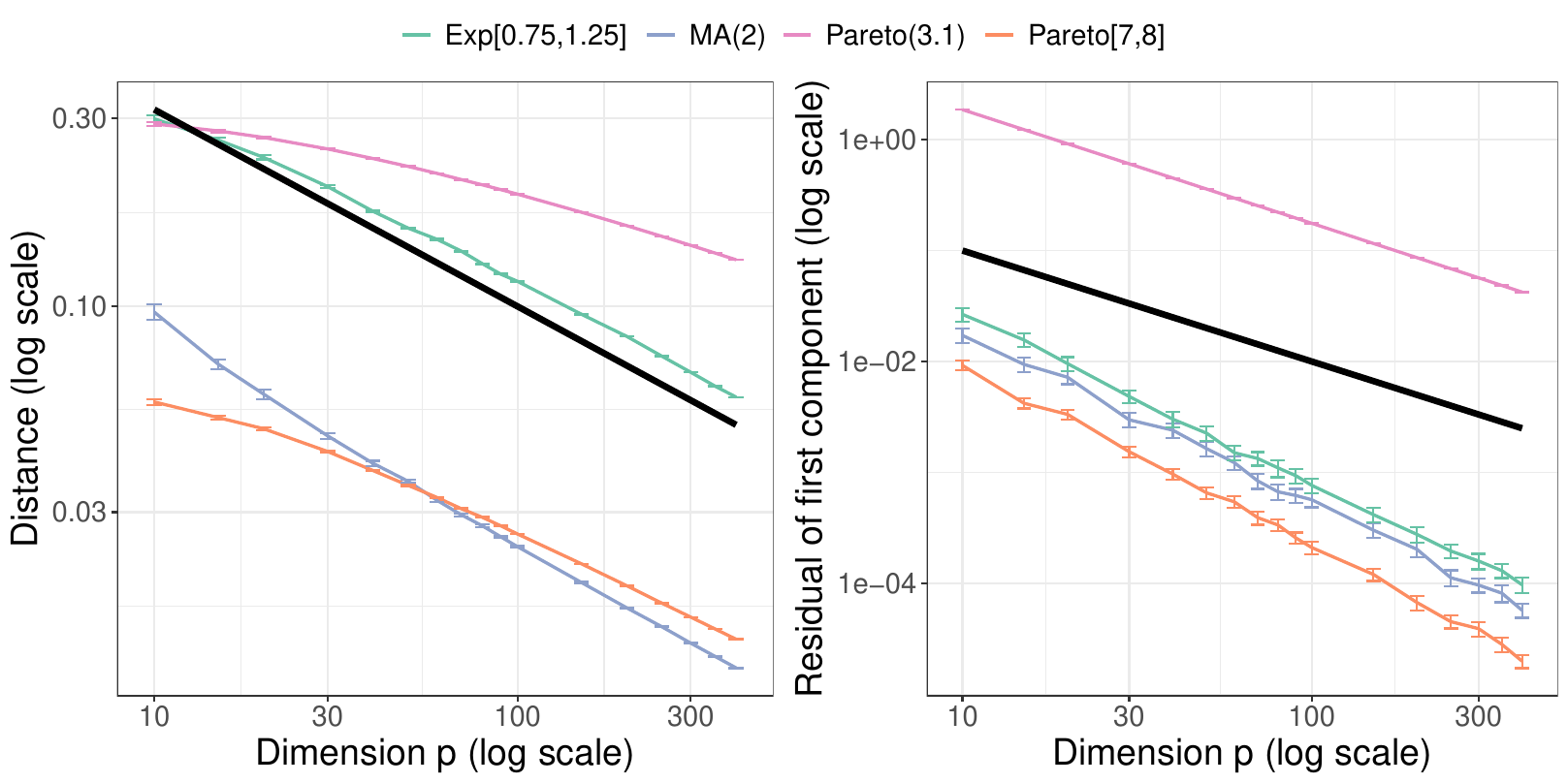}
    \caption{(Left) Euclidean distance $\|\hat{\mathbf{m}}_p - \bs{\mu}\|$. (Right) Residuals from Theorem \ref{theoremComponentExpansion} for $i=1$; see equation \eqref{eq:sim_residuals}. Black reference lines are $\mathcal{O}(\tfrac{1}{\sqrt{p}})$ (left) and $\mathcal{O}(\tfrac{1}{p})$ (right).}
    \label{fig:Simulation}
\end{figure}

We plot $\|\hat{\mathbf{m}}_p - \bs{\mu}\|$, for which Theorem \ref{theoremAllComponentsBound} predicts $\mathcal{O}(\tfrac{1}{\sqrt{p}})$, on the left of Figure \ref{fig:Simulation}. On the right, we plot the residual \begin{equation}\label{eq:sim_residuals}
    \big|\hat{m}_{p,1} - \mu_1 + \frac{1}{2\bar{\sigma}^2 p}\sum_{j=1}^M\ex{(X_1 - \mu_1)(X_j - \mu_j)^2}\big|,
\end{equation} for which Theorem \ref{theoremComponentExpansion} predicts $o(\tfrac{1}{p})$. We consider four data-generating distributions $(X_i)_{i\in\mathbb{N}}$: first, independent exponential random variables with rates uniformly sampled from $[0.75, 1.25]$, second an MA(2) process $X_{i} = L_i + 0.2 L_{i-1} - 0.1 L_{i-2}$ with iid skew normal noise $(L_i)_i$, third iid Pareto variables with shape parameter $3.1$ (meaning $\ex{|X_1|^{k}}<\infty$ iff $k<3.1$), and finally independent Paretos with shapes uniformly sampled from $[7,8]$. Table \ref{tab:slopes} lists linear regression slopes to the simulation curves after $p\geq 100$.

The simulation study clearly supports the theoretical predictions. For the heavy tailed Pareto distributions, $\|\hat{\mathbf{m}}_p - \bs{\mu}\|$ approaches $\mathcal{O}(\tfrac{1}{\sqrt{p}})$ rather slowly in the left hand plot. To see why, note $m_{p,1} - \mu_1 \approx \tfrac{-19.5}{p} + \tfrac{20.2}{p^{1.03}}$ for Pareto(3.1) from the component expansion plot on the right. Since all geometric median components are equal in the iid setting, $\|\mathbf{m}_p - \bs{\mu}\|\approx\sqrt{p}\cdot|\tfrac{-19.5}{p} + \tfrac{20.2}{p^{1.03}}|$, which indeed fits the simulations results (Figure \ref{fig:Pareto3Explanation}). The near cancellation considerably slows the decay for moderate dimension $p$.

To ensure that $\hat{\mathbf{m}}_p$ is sufficiently close to $\mathbf{m}_p$, we compute $\hat{\mathbf{m}}_p$ from $n := p^3$ samples. Then, for a large class of distributions and $s>0$, we have $\|\hat{\mathbf{m}}_p - \mathbf{m}_p\| \lesssim \sqrt{\tfrac{p}{n}} = \tfrac{1}{p}$ with probability $1-e^{4s}$ up to constants not depending on $p,n$ \citep[Thm. 3.9]{Minsker2024GeometricMedian}. While $n := p^3$ controls the simulation error, it introduces a memory bottleneck. Therefore, we compute $\hat{\mathbf{m}}_p$ by the online averaged Robbins-Monro algorithm with the default parameter choices $c_\gamma=2$ and $\alpha=\tfrac{3}{4}$ \citep{Cardot2017OnlineGeometricMedianEstimation}.

\section{Discussion}
The bias of the geometric median was recently highlighted by \citet{Minsker2024GeometricMedian}, where a constant upper bound on the Euclidean bias for a large class of distributions is derived. We complement this result by showing the Euclidean bias to be $\mathcal{O}(\tfrac{1}{\sqrt{p}})$ for a smaller class.  Our expansion in Theorem \ref{theoremComponentExpansion} always contains the term $-\ex{(X_i - \mu_i)^3}$, i.e., the negative component skewness. This aligns with statistical intuition for the \emph{univariate} median \citep{Hippel2005}.
Our result also links to the classic paper of \cite{Brown1983} showing that sample geometric median and mean are equivalent in terms of asymptotic variance for isotropic Gaussians as the ambient dimension approaches infinity.

Theorem \ref{theoremComponentExpansion} can also be applied to the geometric median of means, i.e., when $X_i = \tfrac{1}{K}\sum_{k=1}^K X_i^{(k)}$ with the sequences $(X_i)_{i\in\mathbb{N}}^{(k)}$ for $k=2,\ldots, K$ being iid copies of $(X_i)_{i\in\mathbb{N}}^{(1)}$. In this case, $\alpha_i= \nicefrac{\alpha^{(1)}_i}{K}$. In other words, the first order bias of the population geometric median of means scales \emph{exactly} as $\tfrac{1}{K}$ with the block size $K$. 
Understanding the population bias is one step towards characterizing the bias of the sample geometric median of means. 

The geometric median exhibits a benign bias for high-dimensional distributions beyond those considered in this paper, as empirically observed in \citet{schwank2025robustscorematching}. Towards extending the approach of this paper, we note that if through appropriate mixing conditions on $(X_i)_{i\in\mathbb{N}}$ one can show the convergence in probability $\tfrac{\|\bfX - \mathbf{m}_p\|^2}{\ex{\|\bfX - \mathbf{m}_p\|^2}} \to 1$ similar to Lemma \ref{lemma_conv_prob}, the right hand side components of equation \eqref{eq:basic_equation} converge to $1\cdot\ex{X_i - \mu_i} = 0$, provided one can exchange limit and expectation under a uniform integrability argument similar to the proof of Theorem \ref{theoremComponentExpansion}. This would indeed imply that $m_{p,i}$ converges to $\mu_i$ in more general settings.

Finally, we would like to emphasize that our theory supports existing literature by highlighting that robustness of the geometric median may hinge on the dependence structure of contamination patterns. 
Consider $X_i = (1-d_i)\cdot X_{0,i} + d_{i}\cdot Z_i$, where $\mathbf{d}\in \{0,1\}^p$ is a random vector independent of the rest. The mean is said to not be robust against this contamination, since $\mu_i = \pr{d_i=0}\mu_{0,i} + \pr{d_i=1}\ex{Z_i}$ depends strongly on $\ex{Z_i}$. When $\mathbf{d}$ is $M$-dependent, our theory applies under appropriate conditions on $\bfX_0$ and $\mathbf{Z}$, suggesting the geometric median need not be robust in such a setting. This is consistent with the observations of \citet{RAYMAEKERS2024Cellwise}, as $M$-dependence of $\mathbf{d}$ is a form of \emph{cellwise} contamination. On the other hand, when $\mathbf{d}$ is fully dependent, i.e., all $d_i$ are almost surely equal, also called \emph{rowwise} contamination, the geometric median is provably robust \citep{Minsker2015}. Indeed, our theory does \emph{not} apply in this case due to lacking $M$-dependence.

\vspace{0.7cm}\noindent\textbf{Acknowledgements} This paper is supported by the DAAD programme Konrad Zuse Schools of Excellence in Artificial Intelligence, sponsored by the Federal Ministry of Education and Research. 
Further, this work has been funded by the German Federal Ministry of Education and Research and the Bavarian State Ministry for Science and the Arts. The authors of this work take full responsibility for its content.

\newpage
\appendix
\renewcommand{\thesection}{\Alph{section}} 
\addcontentsline{toc}{section}{Appendix} 
\part{Supplementary Material} 
\parttoc 
\section{Proofs and additional Lemmas}
\subsection{Proof of Theorem \ref{theoremAllComponentsBound}}
\theoremAllComponentsBound*
\begin{proof}
    First, consider the case $p\geq 10M+8$. Let $i\in\{1,\ldots,p\}$. Note that \begin{equation}\label{eq:proof_allCompBound1}
        \E \big[|(X_i - \mu_i)\sum_{|i-j|\leq M}(X_j - m_{j})^2|\big] \leq B
    \end{equation} for a constant $B$ only depending on the $\mathcal{D}$-parameters (see Lemma \ref{lemmaNumeratorBound}). 
    
    We consider indices with sufficient distance to $i$, namely $\mathcal{I}:= \{j\in\{1,\ldots,p\} : |j - i|> 2M\}$. For $\mathbf{v}\in\mathbb{R}^p$, let $\mathbf{v}^* := (v_j)_{j\in\mathcal{I}}\in\mathbb{R}^{d}$, where $d := |\mathcal{I}|$. Lemma \ref{lemmaBasicEqImproved} implies that
    \begin{equation}\label{eq:proof_allCompBound2}
        |m_{i} - \mu_i| \leq B\cdot \ex{\frac{1}{\|\bfX - \mathbf{m}\|}}^{-1} \ex{\frac{1}{2\|\bfX^* - \mathbf{m}^*\|^{3}}},
    \end{equation} where we bounded $\|\bfX - \mathbf{m}\| \geq \|\check{\bfX} - \check{\mathbf{m}}\| \geq \|\bfX^* - \mathbf{m}^*\|$ and used that $\bfX^*$ is independent of $(X_j)_{|i-j|\leq M}$.

    Applying Jensen's inequality with the convex function $1/\sqrt{x}$, we have that \begin{equation}\label{eq:proof_allCompBound3}
        \ex{\frac{1}{\|\bfX - \mathbf{m}\|}}^{-1} \leq \sqrt{\ex{\|\bfX-\mathbf{m}\|^2}}.
    \end{equation}
    Next, apply Lemma \ref{lemmaUnifIntegr} to $\mathbf{Y}:=\bfX^*$ and $\mathbf{v}:=\mathbf{m}^*$ with $k:=\tfrac{3}{2}$. This is possible since $\|\mathbf{m}^*\|^2\leq \sum_{j\in\mathcal{I}} (|\mu_j| + |\mu_j - m_j|)^2 \leq (C + B_1)^2\cdot d$ by Lemma \ref{lemmaGmedComponentsBounded}, and since $d\geq p - (4M+1) \geq 6(M+1)+1$ by assumption on $p$. The resulting bound is: \begin{equation}
        \ex{\frac{\ex{\|\bfX^* -\mathbf{m}^*\|^2}^{3/2}}{\|\bfX^* - \mathbf{m}^*\|^{3}}} \leq \beta(\tfrac{3}{2}, (B_1 + C)^2, \mathcal{D}) =: \tilde{B}.
    \end{equation}
    Combining the bounds and using $\|\bs{\mu} - \mathbf{m}\|^2\leq p\,C$ by Lemma \ref{lemmaInitialGmedMeanBound}, we find:
    \begin{multline*}
        |m_{i} - \mu_i| \leq \frac{B \tilde{B}}{2} \cdot \frac{\sqrt{\ex{\|\bfX - \mathbf{m}\|^2}}}{\ex{\|\bfX^* - \mathbf{m}^*\|^2}^{3/2}} \leq \frac{B\tilde{B}}{2} \frac{\sqrt{\tr{\var{\bfX}} + \|\bs{\mu} - \mathbf{m}\|^2}}{(\tr{\var{\bfX^*}} + 0)^{3/2}} \\
        \leq \frac{B\tilde{B}}{2} \frac{\sqrt{pC + pC}}{((p - (4M+1))\sigmamin^2)^{3/2}} \leq \frac{1}{p} \cdot\frac{B\tilde{B}\sqrt{2C}}{2\sigmamin^3 0.5^{3/2}},
    \end{multline*} where we used $4M+1 \leq \tfrac{p}{2}$ by the assumptions on $p$. As $i$ was arbitrary, the theorem claim is proven for $p\geq 10M+8$.

    When $p\leq 10M+8$, the claim holds with $B_3 := (10M+8)\cdot B_1$ by Lemma \ref{lemmaGmedComponentsBounded}. The maximum of both derived constants is a valid choice for $B_3$.
\end{proof}
\subsection{Proof of Theorem \ref{theoremComponentExpansion}}
\theoremComponentExpansion*
\begin{proof}
    We show the following identity, which is equivalent to the claim: \begin{equation*}
        p\cdot(m_{p,i} - \mu_i) \xrightarrow{p\to\infty} \frac{-1}{2\bar{\sigma}^2}\sum_{|j-i|\leq M} \ex{(X_i - \mu_i)(X_j - \mu_j)^2}.
    \end{equation*}
    For $p$ large enough, Lemma \ref{lemmaBasicEqImproved} implies that\begin{equation*}
        p\cdot(m_{p,i} - \mu_i) = \frac{p \sqrt{\ex{\|\bfX - \mathbf{m}_p\|^2}}}{\ex{\|\bfX - \mathbf{m}_p\|^2}^{3/2}}\, 
        \cdot\, \ex{F_p^{-1}}^{-1} \ex{\frac{-(X_i-\mu_i)\cdot S_p}{F_p\cdot \check{F}_p \cdot (F_p + \check{F}_p)}} =: I_p\, \cdot \,II_p,
    \end{equation*} where we defined the two fractions \begin{align*}
        F_p &:= \sqrt{\frac{\|\bfX - \mathbf{m}_p\|^2}{\ex{\|\bfX - \mathbf{m}_p\|^2}}}, & \check{F}_p &:= \sqrt{\frac{\|\check{\bfX} - \check{\mathbf{m}}_p\|^2}{\ex{\|\bfX - \mathbf{m}_p\|^2}}},
    \end{align*} and the sum $S_p := \sum_{|j-i|\leq M}(X_j - m_{p,j})^2$. 

    For the first factor $I_p$, note $\ex{\|\bfX - \mathbf{m}_p\|^2} = \tr(\var{\bfX}) + \|\bs{\mu} - \mathbf{m}_p\|^2$. Further, $\|\bs{\mu} - \mathbf{m}_p\|^2\leq B_3^2/p$ by Theorem \ref{theoremAllComponentsBound}, implying that \begin{equation*}
        I_p^{-1} =\frac{1}{p}\ex{\|\bfX - \mathbf{m}_p\|^2} = \frac{1}{p}\sum_{i=1}^p \var{X_i} + \mathcal{O}\left(\frac{1}{p^2}\right) \xrightarrow{p\to\infty}\bar{\sigma}^2.
    \end{equation*} Consequently, $\lim_{p\to\infty} I_p = \tfrac{\bar{\sigma}}{\bar{\sigma}^{3}} = \tfrac{1}{\bar{\sigma}^2}$.

    To determine the limit of the second factor $II_p$, we adopt a two-step approach. In the first step, we determine the limit in probability of the expectation arguments. Using Theorem \ref{theoremAllComponentsBound}, the deterministic sequence $(m_{p,j})_{p\in\mathbb{N}}$ converges to $\mu_j$ in particular for all $j$ with $|i-j|\leq M$. Thus: \begin{equation*}
        S_p \to \sum_{|j - i|\leq M} (X_j - \mu_j)^2 =: S
    \end{equation*} in probability as $p\to\infty$. 
    Further, $F_p^2 \to 1$ in probability as shown in Lemma \ref{lemma_conv_prob}. We conclude $F_p = |F_p|\to 1$ by the continuous mapping theorem. Finally, \begin{equation*}
        \check{F}_p^2 = F_p^2  - S_p \cdot\frac{1}{\ex{\|\bfX - \mathbf{m}_p\|^2}} \to 1 - S\cdot0 = 1
    \end{equation*} by $\ex{\|\bfX - \mathbf{m}_p\|^2}^{-1}\leq (p\, \sigmamin^2)^{-1}\to 0$ and Slutsky's theorem. Using the vector continuous mapping theorem and that convergence in probability of a random vector is equivalent to the component convergence, we conclude that \begin{align}\label{eq:proof_expansion1}
        \frac{- (X_i - \mu_i)\,S_p}{F_p\cdot \check{F}_p \cdot (F_p + \check{F}_p)} &\to \frac{-(X_i - \mu_i)\, S}{2} & &\text{and}& F_p^{-1}&\to 1.
    \end{align}

    In the second step, we strengthen the convergence in probability to convergence of the expectations via an uniform integrability argument \citep[Thm 6.2]{DasGupta2008}. First, $\sup_{p\geq 4(M+1)+1} \ex{(F_p^{-1})^2}<\infty$ by Lemma \ref{lemmaUnifIntegr}, implying $\ex{F_p^{-1}}\to \ex{1} = 1$ as $p\to\infty$. Further, for $p$ large enough: \begin{multline*}
        \ex{\left|\frac{ (X_i - \mu_i)\,S_p}{F_p\cdot \check{F}_p \cdot (F_p + \check{F}_p)}\right|^{q/3}} \leq B_4(q,\mathcal{D}) \cdot  \ex{\frac{\ex{\|\bfX - \mathbf{m}_p\|^2}^{q/2}}{(2 \|\bfX^* - \mathbf{m}_p^*\|^3)^{q/3}}} \\
        \leq \frac{B_4\cdot \beta}{2^{q/3}} \frac{\ex{\|\bfX - \mathbf{m}_p\|^2}^{q/2}}{\ex{\|\bfX^* - \mathbf{m}_p^*\|^2}^{q/2}}
        \leq \frac{B_4\cdot \beta}{2} \left(\frac{2p\,C}{(p - (4M+1))\sigmamin^2}\right)^{q/2},
    \end{multline*} where similar to the proof of Theorem \ref{theoremAllComponentsBound} we used independence of $(X_i - \mu_i)S_p$ and $\bfX^*$, then applied Lemma \ref{lemmaNumeratorBound} (introducing $B_4$), then Lemma \ref{lemmaUnifIntegr} applied to $\bfX^*$ (introducing $\beta$), and finally the trace-bound similar to Theorem \ref{theoremAllComponentsBound}. The final expression remains bounded as $p\to\infty$, proving uniform integrability with exponent $q/3 > 1$. We may therefore exchange limit and expectation in both terms of $II_p$, and the theorem's claim follows from \eqref{eq:proof_expansion1}.
\end{proof}

\subsection{Proof of Lemma \ref{lemmaUnifIntegr}}
\lemmaUnifIntegr*
\begin{proof} Let $k>0$ and $p\geq \lceil4k\rceil(M+1) + 1$. To simplify notation and stress the additive structure, we denote $S(\mathbf{v}):= \sum_{i=1}^p (Y_i - v_{i})^2 = \|\mathbf{Y} - \mathbf{v}\|^2$ and $s:= \ex{S(\mathbf{v})}$. We repeatedly make use of the following bounds on $s$: \begin{equation}\label{eq:proof_unifIntegr0}
    \sigmamin^2 p\leq s \leq \sum_{i=1}^p 2(\ex{Y_i^2} + v_i^2) \leq 2 (C+C_2)\,p,
\end{equation} where we used $\ex{(Y_i - v_{i})^2}\geq \ex{(Y_i - \ex{Y_i})^2} = \sigmamin^2$ and Cauchy's inequality $|ab|\leq \tfrac{a^2}{2} + \tfrac{b^2}{2}$. In particular, it holds that $s>0$ and we therefore may prove the Lemma's claim by bounding $\ex{(\tfrac{s}{S(\mathbf{v})})^k}$. Using $\ex{Z}=\int_0^\infty \pr{Z\geq t}\dd{t}$ for any nonnegative random variable $Z$ (even if the expectation should be infinite), we obtain that \begin{multline}\label{eq:proof_unifIntegr_decomp}
     \ex{\left(\frac{s}{S(\mathbf{v})}\right)^k} = \int_0^{r^k} \pr{S(\mathbf{v}) \leq s\cdot t^{-1/k}}\dd{t}  + \int_{r^k}^{R^k} \pr{S(\mathbf{v}) \leq s\cdot t^{-1/k}}\dd{t}  \\ + \int_{R^k}^\infty \pr{S(\mathbf{v}) \leq s\cdot t^{-1/k}}\dd{t}=: I_1 + I_2 + I_3,
 \end{multline} where we split the positive reals at $r^k\leq R^k$ defined by \begin{align}
     r &:= 48(C+C_2)/\sigmamin^2 & R &:= r\cdot p^6. 
 \end{align} The remaining proof bounds the three integrals separately.

 \paragraph{The integral \protect{$I_3$}} Let $t\in (R^k, \infty)$, meaning we must bound the probability that $S(\mathbf{v})$ is close to zero. For arbitrary $m\in\mathbb{R}$ and $\tau\geq0$, we have that\begin{equation}\label{eq:proof_unifIntegr1}
     \pr{(Y_i - m)^2\leq \tau} = \pr{|Y_i - m|\leq \sqrt{\tau}} \leq 2 c \sqrt{\tau}.
 \end{equation} We aggregate this bound on an independent subset of $(Y_{i})_{i=1,\ldots, p}$ of size $n:= \lfloor \tfrac{p-1}{M+1}\rfloor + 1$. Note $n\geq 4k$ and $(n-1)(M+1) + 1 \leq p$ by assumption on $p$. By $M$-dependence, $(Y_{j(M+1) + 1})_{j=0,\ldots, n-1}$ are independent, and thus \begin{multline}\label{eq:proof_unifIntegr2}
     \pr{S(\mathbf{v}) \leq s\cdot t ^{-1/k}} \leq
     \pr{\bigcap_{j=0,\ldots, n-1} \{(Y_{j(M+1) + 1} - v_{j(M+1) + 1})^2 \leq s\cdot t ^{-1/k}\}} \\
     =\prod_{j=0}^n \pr{(Y_{j(M+1) + 1} - v_{j(M+1) + 1})^2 \leq s\cdot t ^{-1/k}} \\
     \overset{\eqref{eq:proof_unifIntegr1}}{\leq}
     \left(2c\sqrt{s\cdot t ^{-1/k}}\right)^n = (2c)^n s^{n/2}t^{-\tfrac{n}{2k}}.
 \end{multline} Importantly, $t^{-\tfrac{n}{2k}}$ is integrable on $(R^k,\infty)$: \begin{multline*}
     \int_{R^k}^\infty t^{-\frac{n}{2k}}\dd{t} = \frac{1}{-\frac{n}{2k} + 1}\left[t^{-\frac{n}{2k} + 1}\right]_{R^k}^\infty \overset{\frac{-n}{2k} + 1 < 0}{=} 
     \frac{1}{\frac{n}{2k} - 1} R^{-\frac{n}{2} + k} \\
     = \frac{1}{\frac{n}{2k} - 1} p^{-3n + 6k} r^{-\frac{n}{2} + k} \overset{n \geq 4k}{\leq} p^{-3n + 6k} r^{-\frac{n}{2} + k}.
 \end{multline*} 
 Thus, we can bound $I_3$ as follows: 
 \begin{multline}\label{eq:proof_unifIntegr3}
     I_3 \overset{\eqref{eq:proof_unifIntegr2}}{\leq} \int_{R^k}^\infty (2c)^n s^{n/2} t^{-\frac{n}{2k}}\dd{t} \leq
     (2c)^n s^{n/2} p^{-3n + 6k} r^{-\frac{n}{2} + k}\\
     \overset{\eqref{eq:proof_unifIntegr0}}{\leq}
     (2c)^n
     (2 (C + C_2))^{n/2} p^{-2.5n + 6k} r^{-\frac{n}{2} + k} =
     r^k\left(\frac{2c\sqrt{2 (C + C_2)}}{p\sqrt{r}}\right)^n p^{-3n/2 + 6k}.
 \end{multline}

 \paragraph{The integral \protect{$I_2$}} Let $t\in (r^k, R^k)$. Then, \begin{equation}\label{eq:proof_unifIntegr7}
    \pr{S(\mathbf{v}) \leq s\cdot t^{-1/k}} \leq \pr{S(\mathbf{v}) \leq \frac{s}{r}}.
\end{equation} We control the right hand side probability by a Hoeffding-type bound allowing for partial dependence among the summands. Since Hoeffding's inequality requires bounded random variables, we first approximate $S(\mathbf{v})$ and $s$ by quantities involving bounded random variables. 

According to Lemma \ref{lemmaBoundedVarsVarianceApprox}, for $B:= \max(\sqrt{2C_2}, \bar{B})$ with some $\bar{B}$ only depending on global constants, it holds that \begin{equation*}
        \var{\sign(Y_i)\min(|Y_i|, B)} \geq \frac{\sigmamin^2}{2}
    \end{equation*} for all $1\leq i\leq p$.
We set $\mathcal{I}:= \{1\leq i\leq p \mid |v_{i}|\leq B\}$. Note that $\mathcal{I}$ contains at least $\lfloor\tfrac{p}{2}\rfloor$ elements. Otherwise, \begin{equation*}
    \|\mathbf{v}\|^2 \geq \sum_{i\in\{1,\ldots,p\}\setminus\mathcal{I}} v_{i}^2 > \sum_{i\in\{1,\ldots,p\}\setminus\mathcal{I}} (\sqrt{2C_2})^2 \geq \bigg\lceil\frac{p}{2}\bigg\rceil 2C_2 \geq C_2 p,
\end{equation*} a contradiction to the theorem assumptions.
Defining \begin{align*}
     A_{i} &:=( \sign(Y_i)\min(|Y_i|, B) - v_{i})^2, &a &:= \ex{\sum_{i\in\mathcal{I}} A_{i}},
 \end{align*} we conclude by the previous discussion that\begin{equation}\label{eq:proof_unifIntegr8}
     a \geq \sum_{i\in\mathcal{I}}\var{\sign(Y_i)\min(|Y_i|, B)} \geq \bigg\lfloor\frac{p}{2}\bigg\rfloor \cdot \frac{\sigmamin^2}{2}.
 \end{equation}
 Note that for $i\in\mathcal{I}$, we have $A_{i} \leq (Y_i - v_{i})^2$. Consequently, $S(\mathbf{v}) \geq \sum_{i\in\mathcal{I}} (Y_i - v_{i})^2 \geq \sum_{i\in\mathcal{I}} A_{i}$ and we have that\begin{equation}\label{eq:proof_unifIntegr9}
     \pr{S(\mathbf{v}) \leq \frac{s}{r}} \leq \pr{ \sum_{i\in\mathcal{I}} A_{i} \leq \frac{s}{r}} = \pr{\sum_{i\in\mathcal{I}} A_{i} \leq a - (a -  \frac{s}{r})}.
 \end{equation} Using $r = 48(C+C_2)/\sigmamin^2$, we find: \begin{multline*}
     a -  \frac{s}{r} \overset{\eqref{eq:proof_unifIntegr0}}{\geq} a - \frac{2(C+C_2)p}{r} = a - \frac{1}{24}\sigmamin^2 p \\
     \geq a -\frac{1}{2}\bigg\lfloor\frac{p}{2}\bigg\rfloor  \frac{\sigmamin^2}{2} \overset{\eqref{eq:proof_unifIntegr8}}{\geq} \bigg\lfloor\frac{p}{2}\bigg\rfloor \frac{\sigmamin^2}{4}.
 \end{multline*} Inserting this in \eqref{eq:proof_unifIntegr9}, we obtain that\begin{equation}\label{eq:proof_unifIntegr11}
     \pr{S(\mathbf{v}) \leq \frac{s}{r}} \leq \pr{\sum_{i\in\mathcal{I}} A_{i} \leq a - \bigg\lfloor\frac{p}{2}\bigg\rfloor \frac{\sigmamin^2}{4}}.
 \end{equation}
 We now apply the Hoeffding-type bound. First, note that for $i\in\mathcal{I}$, we have $0\leq A_{i}\leq (2B)^2$ as both $|\sign(Y_i)\min(|Y_i|, B)|\leq B$ and $|v_{i}|\leq B$. 

 The Hoeffding-type bound from \citep[Thm. 2.1]{Janson2004PartlyDependentHoeffding} builds on $\chi(\mathcal{I})$, the number of sets containing fully independent variables needed to cover $\mathcal{I}$. By $M$-dependence, a valid cover of $\{1,\ldots,p\}$ are the $M+1$ index sets $\mathcal{I}_j := \{j+l(M+1) : l=0,\ldots, \big\lfloor \frac{p-j}{M+1}\big\rfloor\}$ for $j=1,\ldots, M+1$. Restricting this covering to $\mathcal{I}$, we obtain $\chi^*(\mathcal{I}) \leq \chi(\mathcal{I})\leq M+1$ in the notation of \citep{Janson2004PartlyDependentHoeffding}. Thus:  
 \begin{multline*}
     \pr{\sum_{i\in\mathcal{I}} A_{i} \leq a - \bigg\lfloor\frac{p}{2}\bigg\rfloor \frac{\sigmamin^2}{4}} \leq \exp\left(-2\frac{(\lfloor\tfrac{p}{2}\rfloor \frac{\sigmamin^2}{4})^2}{(M+1)\sum_{i\in\mathcal{I}} (2B)^4}\right) \\
     \leq\exp\left(-\frac{\lfloor\tfrac{p}{2}\rfloor^2\cdot \sigmamin^4}{8(M+1)p (2B)^4}\right).
 \end{multline*} Inserting this into equation \eqref{eq:proof_unifIntegr11}, we can bound $I_2$ as follows: \begin{multline}\label{eq:proof_unifIntegr12}
     I_2 \overset{\eqref{eq:proof_unifIntegr7}}{\leq} \int_{r^k}^{R^k}\pr{S(\mathbf{v}) \leq \frac{s}{r}}\dd{t} \leq
     \pr{S(\mathbf{v}) \leq \frac{s}{r}} \cdot R^k \\
     \leq
     \exp\left(-\frac{\lfloor\tfrac{p}{2}\rfloor^2\cdot \sigmamin^4}{8(M+1)p (2B)^4}\right) \cdot p^{6k} \cdot r^k.
 \end{multline}
 
 \paragraph{The integral \protect{$I_1$}} Bounding the probability term by one, we obtain that \begin{equation}\label{eq:proof_unifIntegr13}
     I_1 = \int_{0}^{r^k} \pr{S(\mathbf{v}_p) \leq s_p t^{-1/k}}\dd{t} \leq r^k.
 \end{equation}

 \paragraph{Final step} Inserting \eqref{eq:proof_unifIntegr3}, \eqref{eq:proof_unifIntegr12}, \eqref{eq:proof_unifIntegr13} into \eqref{eq:proof_unifIntegr_decomp}, we have shown: \begin{multline*}
     \ex{\left(\frac{s}{S(\mathbf{v})}\right)^k} \leq r^k\left(\frac{2c\sqrt{2 (C + C_2)}}{p\sqrt{r}}\right)^n p^{-3n/2 + 6k} \\
     + r^k\cdot\exp\left(-\frac{\lfloor\tfrac{p}{2}\rfloor^2\cdot \sigmamin^4}{8(M+1)p (2B)^4}\right) \cdot p^{6k} + r^k.
 \end{multline*} Studying the right hand side for large $p$, we note that the fraction in the first term becomes smaller than $1$ and the exponent is $n\geq 4k > 0$; further by $n\geq 4k$ we have $-3n/2 + 6k \leq 0$, and finally the exponential decay by $\lfloor\tfrac{p}{2}\rfloor^2/p$ is faster than the growth of $p^{6k}$. Consequently, the supremum of the right hand side over $p\in\mathbb{N}$ is finite. Further, as the right hand side only depends on $c, C, C_2, k,\sigmamin^2,M$, and $r, n, B$ are functions of $M, q, p, C, C_2, \sigmamin^2$ only, this finite supremum only depends on those quantities.
\end{proof}

\subsection{Proof of Lemma \ref{lemmaGmedComponentsBounded}}
\lemmaGmedComponentsBounded*
\begin{proof}
    First, consider $p\geq 4(M+1)+1$ and $1\leq i\leq p$. Taking the result of Lemma \ref{lemma_basic_inequality} component-wise, we find: \begin{equation}\label{eq:proof_GmedCompBound1}
        |\mu_i - m_{i}| \leq \ex{\frac{1}{\|\bfX - \mathbf{m}\|}}^{-1} \left|\ex{\frac{X_i - \mu_i}{\|\bfX - \mathbf{m}\|}}\right|.
    \end{equation}
    Applying Jensen's inequality with the convex function $1/\sqrt{x}$, we have that\begin{equation*}
        \frac{1}{\sqrt{\ex{\|\bfX - \mathbf{m}\|^2}}}\ex{\frac{1}{\|\bfX - \mathbf{m}\|}}^{-1} \leq \frac{\sqrt{\ex{\|\bfX-\mathbf{m}\|^2}}}{\sqrt{\ex{\|\bfX - \mathbf{m}\|^2}}} = 1.
    \end{equation*}
    Further, by the Cauchy-Schwarz inequality we find:  \begin{multline*}
        \sqrt{\ex{\|\bfX - \mathbf{m}\|^2}}\left|\ex{\frac{X_i - \mu_i}{\|\bfX - \mathbf{m}\|}}\right| \leq \sqrt{\ex{(X_i -\mu_i)^2}} \cdot\\
        \sqrt{\ex{\frac{\ex{\|\bfX - \mathbf{m}\|^2}}{\|\bfX - \mathbf{m}\|^2}}} \overset{\text{Lemma }\ref{lemmaUnifIntegr}}{\leq} \sqrt{C}\cdot \sqrt{\beta(1, 2(C^2 + C), \mathcal{D})}
    \end{multline*} where we applied Lemma \ref{lemmaUnifIntegr} with $k=1$, using that $\|\mathbf{m}\|^2\leq 2 (C^2 + C) p$ by Lemma \ref{lemmaInitialGmedMeanBound} (see the proof of Lemma \ref{lemma_basic_inequality}). Inserting both the previous bounds into equation \eqref{eq:proof_GmedCompBound1} yields the claim for $p\geq 4(M+1)+1$.

    When $p< 4(M+1)+1$, use Lemma \ref{lemmaInitialGmedMeanBound} and properties of $\mathcal{D}$ to bound \begin{equation*}
        |\mu_i - m_{i}| \leq \|\bs{\mu} - \mathbf{m}\| \leq \sqrt{Cp} \leq \sqrt{C\cdot(4(M+1)+1)},  
    \end{equation*} which finalizes the proof.
\end{proof}

\subsection{Additional Lemmas}
\begin{lemma}\label{lemma_exists_unique_gmed}
    Let the $p$-variate $\bfX$ belong to $\mathcal{D}(M, q, c, \sigmamin, C)$. Then, when $p\geq M+2$, there exists a unique geometric median of $\bfX$.
\end{lemma}
\begin{proof}
    For $\mathbf{m}, \mathbf{m}^\prime\in\mathbb{R}^p$, we have that \begin{equation*}
        |\ex{\|\mathbf{X} - \mathbf{m}\|} - \ex{\|\bfX - \mathbf{m}^\prime\|}|\leq \ex{\|\mathbf{m}-\mathbf{m}^\prime\|} = \|\mathbf{m}-\mathbf{m}^\prime\|
    \end{equation*}
    by the inverse triangle inequality, hence $\mathbf{m}\mapsto\ex{\|\bfX - \mathbf{m}\|}$ is continuous. Also, $\ex{\|\bfX - \mathbf{m}\|}$ diverges to $\infty$ as $\|\mathbf{m}\|\to\infty$, since \begin{equation*}
        \ex{\|\bfX - \mathbf{m}\|} \geq \ex{|\|\bfX\| - \|\mathbf{m}\||}\geq \|\mathbf{m}\| - \ex{\|\bfX\|}.
    \end{equation*}
    Since continuous functions admit minimizers on compact sets, a geometric median exists.

    When $p\geq M+2$, then $X_1$ and $X_{M+2}$ are independent, which ensures the distribution of $\bfX$ does not concentrate on a one-dimensional linear subspace. By \citep{Milasevic1987}, the geometric median is unique. 
\end{proof}

\begin{lemma}\label{lemma_basic_inequality}
    Let the $p$-variate $\bfX$ belong to $\mathcal{D}(M, q, c, \sigmamin, C)$ with $q>2$. Let $\bs{\mu}:=\ex{\bfX}$ and let $\mathbf{m}$ be the geometric median of $\bfX$. Then, when $p\geq 2(M+1)+1$,
    \begin{equation*}
        \mathbf{m} - \bs{\mu} = \ex{\frac{1}{\|\bfX - \mathbf{m}\|}}^{-1}\ex{\frac{\bfX - \bs{\mu}}{\|\bfX - \mathbf{m}\|}}
    \end{equation*}
\end{lemma}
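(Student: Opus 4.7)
The identity is the first-order optimality condition for the convex objective $g(\mathbf{m}) := \ex{\|\bfX - \mathbf{m}\|}$, whose unique minimizer at $\mathbf{m}_p$ is provided by Lemma~\ref{lemma_exists_unique_gmed}. My plan is to (i) verify that both expectations appearing in the identity are finite, (ii) justify the interchange of gradient and expectation by dominated convergence, and (iii) set the gradient to zero at $\mathbf{m}_p$ and add and subtract $\bs{\mu}$ inside the expectation to arrive at the stated form.

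For (i), the threshold $p \geq 2(M+1)+1$ is chosen precisely so that the three indices $1,\,M+2,\,2M+3$ all lie in $\{1,\ldots,p\}$ and are mutually separated by at least $M+1$; by $M$-dependence these three components are jointly independent, and since each marginal density is bounded by $c$, the $\mathbb{R}^3$-valued vector $V := (X_1-m_1,\,X_{M+2}-m_{M+2},\,X_{2M+3}-m_{2M+3})$ has density bounded by $c^3$. Because $\|\bfX - \mathbf{m}\| \geq \|V\|$, it suffices to bound $\ex{\|V\|^{-k}}$. Splitting the integral at $\|\mathbf{v}\|=1$ and using spherical coordinates, the inner piece $\int_{\|\mathbf{v}\|\leq 1} c^3 \|\mathbf{v}\|^{-k}\,d\mathbf{v}$ is proportional to $\int_0^1 r^{2-k}\,dr$, which is finite for $k<3$, while the outer piece is at most $\int f_V = 1$. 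Hence $\ex{\|\bfX - \mathbf{m}\|^{-k}}<\infty$ for $k\in\{1,2\}$, and Cauchy--Schwarz together with $\ex{(X_i-\mu_i)^2}\leq C$ gives $\ex{|X_i-\mu_i|/\|\bfX - \mathbf{m}_p\|}<\infty$ componentwise.

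For (ii), since $\|\bfX - \mathbf{m}\|$ is $1$-Lipschitz in $\mathbf{m}$, its difference quotient along any direction $\mathbf{h}$ is bounded in absolute value by $\|\mathbf{h}\|$, and absolute continuity of $\bfX$ guarantees $\prob(\bfX=\mathbf{m})=0$, so the pointwise gradient $-(\bfX-\mathbf{m})/\|\bfX-\mathbf{m}\|$ exists almost surely. Dominated convergence then yields $\nabla g(\mathbf{m}) = -\ex{(\bfX-\mathbf{m})/\|\bfX-\mathbf{m}\|}$; setting this to zero at $\mathbf{m}_p$, decomposing $\bfX - \mathbf{m}_p = (\bfX - \bs{\mu}) + (\bs{\mu} - \mathbf{m}_p)$ inside the expectation, and isolating $\mathbf{m}_p - \bs{\mu}$ delivers the claim. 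The main obstacle is the integrability step: the singularity of $\|\bfX - \mathbf{m}\|^{-1}$ at $\bfX = \mathbf{m}$ is genuinely delicate, and it is precisely the independence of three coordinates, granted by $p \geq 2(M+1)+1$, that supplies the $r^2$ volume factor in spherical coordinates needed to tame it. The derivative interchange and the final algebraic manipulation are then routine.
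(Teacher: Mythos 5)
Your proof is correct, and its skeleton (differentiating under the expectation, setting the gradient to zero at $\mathbf{m}_p$, and adding and subtracting $\bs{\mu}$) matches the paper's. Where you genuinely diverge is the integrability step. The paper obtains $\ex{\|\bfX-\mathbf{m}_p\|^{-1}}<\infty$ by combining Lemma~\ref{lemmaInitialGmedMeanBound} with the uniform-integrability Lemma~\ref{lemmaUnifIntegr} at $k=1/2$, i.e., by the machinery it needs later anyway; this yields the stronger, dimension-uniform bound $\ex{\|\bfX-\mathbf{m}_p\|^{-1}}\leq \beta(0.5,C,\mathcal{D})/(\sigmamin\sqrt{p})$, and it is this requirement (rather than counting independent coordinates) that produces the threshold $p\geq 2(M+1)+1$ in the paper. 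You instead give a self-contained elementary argument: for such $p$ the coordinates $1$, $M+2$, $2M+3$ are jointly independent with joint density bounded by $c^3$, and restricting the norm to these three coordinates plus spherical coordinates in $\mathbb{R}^3$ gives $\ex{\|\bfX-\mathbf{m}\|^{-k}}<\infty$ for all $k<3$; taking $k=2$ and Cauchy--Schwarz then handles the numerator term. This is essentially a localized version of the bounded-density argument the paper itself deploys in Step~1 of the proof of Lemma~\ref{lemmaUnifIntegr}, and it suffices here because the identity only requires finiteness (and strict positivity --- worth stating explicitly, though it is immediate from $\pr{\bfX=\mathbf{m}_p}=0$) of the normalizing expectation, not a rate in $p$. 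A side benefit of your route is that it does not use $q>2$ at all, only $\var{X_i}\leq C$, whereas the paper's proof inherits that hypothesis from Lemma~\ref{lemmaUnifIntegr}. The trade-off is that your argument, while shorter and more transparent for this lemma in isolation, does not produce the quantitative $\mathcal{O}(1/\sqrt{p})$ control on $\ex{\|\bfX-\mathbf{m}_p\|^{-1}}$ that the paper reuses downstream.
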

\begin{proof}
    Let $G(\mathbf{m}):= \|\bfX - \mathbf{m}\|$ for $\mathbf{m}\in\mathbb{R}^p$. We have for almost all $\mathbf{m}$ that \begin{equation*}
        |\partial_i G(\mathbf{m})| = \left|\frac{(m_i - X_i)}{\|\bfX - \mathbf{m}\|}\right| \leq \frac{\|\bfX - \mathbf{m}\|}{\|\bfX - \mathbf{m}\|} = 1.
    \end{equation*}
    Consequently, by
    \citep[Ch. 3.3, Ex. 3.3]{Shorack2017}, $\ex{G(\mathbf{m})}$ is differentiable and \begin{equation*}
        \nabla \ex{G(\mathbf{m})} = \ex{\frac{\mathbf{m} - \bfX}{\|\bfX - \mathbf{m}\|}}.
    \end{equation*} The geometric median $\mathbf{m}$ of $\bfX$ (see Lemma \ref{lemma_exists_unique_gmed}) optimizes $\mathbf{m}\mapsto\ex{G(\mathbf{m})}$, implying that\begin{equation}\label{eq:proof_basic_inequality}
        0 = \ex{\frac{\bfX - \mathbf{m}}{\|\bfX - \mathbf{m}\|}}.
    \end{equation}
    By Lemma \ref{lemmaUnifIntegr} with $k=\tfrac{1}{2}$ (note $\|\mathbf{m}\|^2\leq 2 (\|\bs{\mu}\|^2 + \|\mathbf{m} -\bs{\mu}\|^2) \leq 2 (C^2 + C)p$ by Lemma \ref{lemmaInitialGmedMeanBound}; both Lemmas are proven separately later), we have for $p\geq 2(M+1)+1$ that \begin{equation*}
        \ex{\frac{1}{\|\bfX - \mathbf{m}\|}} \leq \frac{\beta(0.5, 2 (C^2 + C), \mathcal{D})}{\sqrt{\ex{\|\bfX - \mathbf{m}\|^2}}} \leq \frac{\beta(0.5, 2 (C^2 + C), \mathcal{D})}{\sigmamin \sqrt{p}} < \infty
    \end{equation*} by $\ex{\|\bfX - \mathbf{m}\|^2}\geq \ex{\|\bfX - \bs{\mu}\|^2}$ due to MSE optimality of the mean. Therefore, adding $0=\ex{\tfrac{\bs{\mu} - \bs{\mu}}{\|\bfX - \mathbf{m}\|}}$ to \eqref{eq:proof_basic_inequality}, we find: \begin{equation*}
        \ex{\frac{\mathbf{m} - \bs{\mu}}{\|\bfX - \mathbf{m}\|}} = \ex{\frac{\bfX - \bs{\mu}}{\|\bfX - \mathbf{m}\|}}.
    \end{equation*} Pulling constants out of the expectations and dividing by $\ex{\tfrac{1}{\|\bfX - \mathbf{m}\|}} >0$ (otherwise $\bfX=\mathbf{m}$ almost surely, a contradiction to $\sigmamin>0$), we have shown the claim.
\end{proof}

\begin{restatable}{lemma}{lemmaInitialGmedMeanBound}\label{lemmaInitialGmedMeanBound}
    Let the $p$-variate $\bfX$ belong to $\mathcal{D}(M, q, c, \sigmamin, C)$. Then, $\|\mathbf{m} - \bs{\mu}\|\leq \sqrt{Cp}$.
\end{restatable} 
\begin{proof}
    By assumption, we have $\var{X_i}\leq C$ for all $i\in\mathbb{N}$. Using the triangle inequality, optimality of the geometric median, Jensen's inequality and the variance bound yields that \begin{multline*}
        \|\mathbf{m} - \bs{\mu}\| = \|\ex{\mathbf{m} - \bs{\mu}}\| = \|\ex{\mathbf{m} - \bfX}\| \leq \ex{\|\mathbf{m} - \bfX\|} \\\leq
        \ex{\|\bs{\mu} - \bfX\|} \leq \sqrt{\ex{\|\bs{\mu} - \bfX\|^2}} \leq \sqrt{Cp}
    \end{multline*} as desired.
\end{proof}

\begin{restatable}{lemma}{lemmaBasicEqImproved}\label{lemmaBasicEqImproved}
    Let the $p$-variate $\bfX$ belong to the class $\mathcal{D}(M, q, c, \sigmamin, C)$ with $q>2$. 
    Fix $i\leq p$ and define $\check{\bfX}:= (X_j)_{\{1\leq j\leq p\, :\, |j-i|>M\}}$ and similarly $\check{\mathbf{m}}=(m_j)_{\{1\leq j\leq p\, :\, |j-i|>M\}}$. When $p\geq 4(M+1)$, it holds that \begin{equation*}
        m_{i} - \mu_i = \ex{\frac{1}{\|\bfX - \mathbf{m}\|}}^{-1} \ex{\frac{-(X_i - \mu_i)\sum_{|j - i|\leq M}(X_j - m_{j})^2}{\|\bfX - \mathbf{m}\|\|\check{\bfX} - \check{\mathbf{m}}\|(\|\bfX - \mathbf{m}\| + \|\check{\bfX} - \check{\mathbf{m}}\|)}}.
    \end{equation*}
\end{restatable}
\begin{proof}
    Recall that by Lemma \ref{lemma_basic_inequality} we have: \begin{equation}\label{eq:proof_improvedbasic1}
        m_i - \mu_i = \ex{\frac{1}{\|\bfX - \mathbf{m}\|}}^{-1}\ex{\frac{X_i - \mu_i}{\|\bfX - \mathbf{m}\|}}.
    \end{equation}

    Further, $\check{\bfX}$ and $\check{\mathbf{m}}$ have at least $2(M+1)+1$ components by assumption on $p$. Using Lemma \ref{lemmaGmedComponentsBounded} to control $\|\check{\mathbf{m}}\|^2$, we may apply Lemma \ref{lemmaUnifIntegr} with $k=\tfrac{1}{2}$ to obtain $\ex{\tfrac{1}{\|\check{\bfX} - \check{\mathbf{m}}\|}} < \infty$. Thus: 
    \begin{equation*}
       \ex{\frac{X_i - \mu_i}{\|\check{\bfX} - \check{\mathbf{m}}\|}} = \ex{X_i - \mu_i} \ex{\frac{1}{\|\check{\bfX} - \check{\mathbf{m}}\|}} = 0.
    \end{equation*} For reals $x,y>0$ it holds that \begin{equation*}
        \frac{1}{\sqrt{x}} - \frac{1}{\sqrt{y}} = \frac{\sqrt{y} - \sqrt{x}}{\sqrt{x}\sqrt{y}} = \frac{y - x}{\sqrt{x}\sqrt{y}\,(\sqrt{x} + \sqrt{y})}.
    \end{equation*} Subtracting $0=\ex{\tfrac{1}{\|\bfX - \mathbf{m}\|}}^{-1}\ex{\tfrac{X_i - \mu_i}{\|\check{\bfX} - \check{\mathbf{m}}\|}}$ from the right hand side of \eqref{eq:proof_improvedbasic1} and inserting this algebraic relation yields the claim.
\end{proof}

\begin{restatable}{lemma}{lemmaBoundedVarsVarianceApprox}\label{lemmaBoundedVarsVarianceApprox}
    Let $(Y_i)_{i\in\mathbb{N}}$ be a sequence of real random variables such that $\sup_{i\in\mathbb{N}}\ex{|Y_i|^q}\leq C_1 < \infty$ for some $q>2$ and $\inf_{i\in\mathbb{N}}\var{Y_i}\geq \sigmamin^2$. Then there exists $\bar{B}>0$ depending only $ q, C_1$ and $\sigmamin^2$ such that  \begin{equation*}
        \var{\sign(Y_i)\cdot\min(|Y_i|, B)} \geq \frac{\sigmamin^2}{2}
    \end{equation*} for all $B\geq\bar{B}$ and $i\in\mathbb{N}$.
\end{restatable}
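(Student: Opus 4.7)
The plan is a two-step argument. First, control the $L^2$-distance between $Y_i$ and its clipped version $\tilde Y_i := \sign(Y_i)\cdot\min(|Y_i|, B)$ using the uniform $q$-th moment bound with $q>2$. Second, transfer this control to standard deviations via a reverse Minkowski triangle inequality, so that a large $B$ forces $\sd{\tilde Y_i}$ to be close to $\sd{Y_i} \ge \sigmamin$.

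For the truncation error, I would observe that $|Y_i - \tilde Y_i| = (|Y_i| - B)_+ \leq |Y_i|\cdot \mathbf{1}\{|Y_i| > B\}$, and that on the event $\{|Y_i| > B\}$ we have $|Y_i|^2 \leq |Y_i|^q / B^{q-2}$. Combined with $\ex{|Y_i|^q} \leq C_1$, this gives
\begin{equation*}
\ex{(Y_i - \tilde Y_i)^2} \;\leq\; \frac{1}{B^{q-2}}\ex{|Y_i|^q} \;\leq\; \frac{C_1}{B^{q-2}}.
\end{equation*}
Since $\sd{Z} \le \sqrt{\ex{Z^2}}$ (the mean being the $L^2$-minimizer of $c\mapsto \ex{(Z-c)^2}$), this also gives $\sd{Y_i - \tilde Y_i} \leq \sqrt{C_1}\, B^{-(q-2)/2}$.

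For the transfer step, write $Y_i = \tilde Y_i + (Y_i - \tilde Y_i)$ and apply Minkowski's inequality to the centered random variables to get $\sd{Y_i} \leq \sd{\tilde Y_i} + \sd{Y_i - \tilde Y_i}$, i.e.,
\begin{equation*}
\sd{\tilde Y_i} \;\geq\; \sd{Y_i} - \sd{Y_i - \tilde Y_i} \;\geq\; \sigmamin - \sqrt{C_1}\, B^{-(q-2)/2}.
\end{equation*}
Choosing $\bar{B}$ large enough that $\sqrt{C_1}\,\bar{B}^{-(q-2)/2} \leq \sigmamin(1-1/\sqrt{2})$, which depends only on $q, C_1, \sigmamin^2$, then yields $\sd{\tilde Y_i} \geq \sigmamin/\sqrt{2}$, hence $\var{\tilde Y_i} \geq \sigmamin^2/2$ for every $B \geq \bar{B}$. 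Uniformity in $i$ (and thus over $1 \leq i \leq p$ for all $p$) is immediate from the uniform hypotheses $\sup_i \ex{|Y_i|^q} \leq C_1$ and $\inf_i \var{Y_i} \geq \sigmamin^2$.

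There is no genuine obstacle here: the assumption $q > 2$ yields polynomial decay of the truncation error in $B$, and the one subtlety is using the \emph{reverse} triangle inequality, since $\sd{\tilde Y_i}$ has to be bounded from below rather than above.
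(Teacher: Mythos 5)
Your proof is correct, and it takes a genuinely different route from the paper's. The paper expands $|\var{Y_i}-\var{\sign(Y_i)\min(|Y_i|,B)}|$ directly into differences of second and first moments, bounds each tail contribution via Markov's inequality (using $\ex{|\sign(Y_i)\min(|Y_i|,B)|^j\cdot 1_{\{|Y_i|>B\}}}=B^j\pr{|Y_i|>B}\leq C_1B^{j-q}$ for $j\in\{1,2\}$), arrives at the explicit bound $2C_1B^{2-q}+C_1^{1+1/q}B^{1-q}$, and then argues this is monotonically decreasing in $B$ and eventually below $\sigmamin^2/2$. You instead bound the $L^2$ norm of the truncation error: $|Y_i-\tilde Y_i|=(|Y_i|-B)_+\leq |Y_i|\cdot 1_{\{|Y_i|>B\}}$ together with $|Y_i|^2\leq |Y_i|^q/B^{q-2}$ on that event gives $\ex{(Y_i-\tilde Y_i)^2}\leq C_1B^{-(q-2)}$, and the triangle inequality for the standard-deviation seminorm (applied to centered variables) converts this into the lower bound $\sd{\tilde Y_i}\geq\sigmamin-\sqrt{C_1}\,B^{-(q-2)/2}$. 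Every step checks out, including the reverse-triangle direction and the uniformity of $\bar{B}$ in $i$ and over all $B\geq\bar{B}$ (your bound is manifestly decreasing in $B$, so no separate monotonicity remark is needed). Your version is arguably cleaner, avoiding the bookkeeping of the variance expansion; the paper's version yields a slightly more explicit closed-form constant for the variance discrepancy, but both deliver a $\bar{B}$ depending only on $q$, $C_1$ and $\sigmamin^2$, which is all the application requires.
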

\begin{proof}
    First, recall that for any nonnegative random variable $Y$ and $j>0$, we have $\ex{Y^j}= \int_0^\infty j y^{j-1}\pr{Y > y}\dd{y}$ \citep[Lemma 2.2.13]{DurrettProbabilityBook}. 

    Let $B>0$ be arbitrary. Then, for $j\in \{1,2\}$, $i\in\mathbb{N}$, using Markov's inequality, we have:\begin{multline*}
        \ex{|\sign(Y_i)\cdot\min(|Y_i|, B)|^j\cdot 1_{\{|Y_i| > B\}}} = B^j \pr{|Y_i| > B} =\\
        B^j \pr{|Y_i|^q > B^q} \leq B^j \frac{C_1}{B^q} = C_1 B^{j-q}.
    \end{multline*} Further, note $\sign(Y_i)\cdot\min(|Y_i|, B)\cdot 1_{\{|Y_i| \leq B\}} = Y_i\cdot 1_{\{|Y_i| \leq B\}}$. Thus, we have: \begin{multline*}
        \left|\var{Y_i} - \var{\sign(Y_i)\cdot\min(|Y_i|, B)}\right| = |\ex{Y_i^2} - \ex{Y_i}^2 - \\
        \ex{(\sign(Y_i)\cdot\min(|Y_i|, B))^2} + \ex{\sign(Y_i)\cdot\min(|Y_i|, B)}^2 | \leq \\
        \ex{(\sign(Y_i)\cdot\min(|Y_i|, B))^2 \cdot 1_{\{|Y_i| > B\}}} + \\
        \ex{|\sign(Y_i)\cdot\min(|Y_i|, B)|\cdot 1_{\{|Y_i| > B\}}}\cdot\ex{|Y_i + \sign(Y_i)\cdot\min(|Y_i|, B)|} \leq \\
        C_1 B^{2-q} + C_1 B^{1-q} \cdot (C_1^{1/q} + B) = 2C_1 B^{2-q} + C_1^{1+1/q}B^{1-q}.
    \end{multline*} Letting $B\to\infty$, we obtain that the difference of the variances is less than $\tfrac{\sigmamin^2}{2}$ for $B$ large enough, which, together with the fact that the bound in the preceeding display is monotonically decreasing in $B$, yields the claim.
\end{proof}

\begin{restatable}{lemma}{lemmaNumeratorBound}\label{lemmaNumeratorBound}
    Let the $p$-variate $\bfX$ belong to $\mathcal{D}(M, q, c, \sigmamin, C)$ with $q \geq 3$. Then, for all $3\leq k \leq q$, we have that \begin{equation*}
        \ex{|\,(X_i - \mu_i)\sum_{|j-i|\leq M} (X_j - m_{j})^2\,|^{k/3}} \leq B_4(k, \mathcal{D})
    \end{equation*} for all $1\leq i\leq p$ and a constant $B_4$ only depending on $k$ and the $\mathcal{D}$-parameters.
\end{restatable}
\begin{proof}
    Let $N:= |\{j\in\{1,\ldots,p\} : |j - i|\leq M\}|$ be the number of indices $j$ in the sum. Then, by convexity of $x\mapsto x^{k/3}$, we have that \begin{equation*}
        \ex{|\,(X_i - \mu_i)\sum_{|j-i|\leq M} (X_j - m_{j})^2\,|^{k/3}} \leq N^{\tfrac{k}{3} - 1} \sum_{|j-i|\leq M}\ex{|\,(X_i - \mu_i)(X_j - m_{j})^2\,|^{k/3}}.
    \end{equation*} Further, by Hölder's inequality \begin{multline*}
        \ex{|\,(X_i - \mu_i)(X_j - m_{j})^2\,|^{k/3}} \leq \left(\|X_i-\mu_i\|_{L^k} \cdot \|(X_j - m_j)^2\|_{L^{k/2}}\right)^{k/3}\\
        = \ex{|X_i - \mu_i|^k}^{\tfrac{1}{3}} \cdot \ex{|X_j - m_j|^k}^{\tfrac{2}{3}}.
    \end{multline*} 
    
    Note $|\mu_i|\leq C$ and $|m_{j}|\leq B_1 + C$ according to Lemma \ref{lemmaGmedComponentsBounded}. Further, for any $A\in\mathbb{R}$ we have that\begin{equation*}
        \ex{|X_i - A|^k} \leq \ex{(|X_i| + |A|)^k} \leq 2^{k-1} (\ex{|X_i|^k} + A^k) \leq 2^{k-1}(C+A^k).
    \end{equation*}Taking everything together, we find: \begin{equation*}
        \ex{|\,(X_i - \mu_i)\sum_{|j-i|\leq M} (X_j - m_{j})^2\,|^{k/3}} \leq N^{\tfrac{k}{3} - 1} \cdot N \cdot 2^{k-1}(C+C^k)^{\tfrac{1}{3}}(B_1+2C)^{\tfrac{2}{3}}.
    \end{equation*} Noting $N\leq 2M+1$ yields the claim.
\end{proof}

\begin{lemma}\label{lemma_conv_prob}
    Let $(X_i)_i\in \mathcal{D}(M, q, c, \sigmamin, C)$ with $q> 2$. Then, $\|\bfX - \mathbf{m}_p\|^2/\ex{\|\bfX - \mathbf{m}_p\|^2}\to 1$ in probability.
\end{lemma}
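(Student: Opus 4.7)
The plan is to reduce the claim to a weak law of large numbers for the $M$-dependent array $Z_{p,i}:=(X_i-m_{p,i})^2$, $i=1,\dots,p$. Set $S_p:=\|\bfX-\mathbf{m}_p\|^2=\sum_{i=1}^p Z_{p,i}$. By Lemma \ref{lemmaGmedComponentsBounded} combined with the $\mathcal{D}$-class bound $|\mu_i|\leq C$, we have $\sup_{p,i}|m_{p,i}|\leq B_1+C$, so the elementary inequality $|X_i-m_{p,i}|^q\leq 2^{q-1}(|X_i|^q+(B_1+C)^q)$ yields $\sup_{p,i}\ex{Z_{p,i}^{q/2}}\leq K$ for a constant $K$ depending only on the $\mathcal{D}$-parameters. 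Since $p\sigmamin^2\leq \ex{S_p}\leq Kp$, showing $S_p/\ex{S_p}\to 1$ in probability is equivalent to showing $\tfrac{1}{p}(S_p-\ex{S_p})\to 0$ in probability.

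A direct Chebyshev argument on $S_p$ is unavailable, because $q/2$ need not exceed $2$ and so $\var{Z_{p,i}}$ is not controlled. I would therefore truncate: for a threshold $B>0$, define $Z_{p,i}^{(B)}:=\min(Z_{p,i},B)$ and $R_{p,i}^{(B)}:=Z_{p,i}-Z_{p,i}^{(B)}\geq 0$, and decompose
\begin{equation*}
\tfrac{1}{p}(S_p-\ex{S_p})=\tfrac{1}{p}\sum_{i=1}^p\bigl(Z_{p,i}^{(B)}-\ex{Z_{p,i}^{(B)}}\bigr)+\tfrac{1}{p}\sum_{i=1}^p\bigl(R_{p,i}^{(B)}-\ex{R_{p,i}^{(B)}}\bigr).
\end{equation*}

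For the bounded piece, $M$-dependence gives $\Cov(Z_{p,i}^{(B)},Z_{p,j}^{(B)})=0$ whenever $|i-j|>M$, while for the remaining at most $(2M+1)p$ index pairs the covariance is bounded by $B^2$ via Cauchy--Schwarz (since $0\leq Z_{p,i}^{(B)}\leq B$). Hence $\var{\sum_i Z_{p,i}^{(B)}}\leq (2M+1)pB^2$, and Chebyshev yields the bound $4(2M+1)B^2/(p\epsilon^2)$ on the probability that the first sum exceeds $\epsilon/2$ in absolute value, which vanishes as $p\to\infty$ for each fixed $B$. For the tail piece, nonnegativity of $R_{p,i}^{(B)}$ and the uniform moment bound yield $\ex{R_{p,i}^{(B)}}\leq \ex{Z_{p,i}^{q/2}}/B^{q/2-1}\leq K/B^{q/2-1}$, and using the crude bound $|R_{p,i}^{(B)}-\ex{R_{p,i}^{(B)}}|\leq R_{p,i}^{(B)}+\ex{R_{p,i}^{(B)}}$ together with Markov,
\begin{equation*}
\pr{\tfrac{1}{p}\sum_{i=1}^p\bigl|R_{p,i}^{(B)}-\ex{R_{p,i}^{(B)}}\bigr|>\tfrac{\epsilon}{2}}\leq \frac{4K}{\epsilon\, B^{q/2-1}},
\end{equation*}
which can be made arbitrarily small by choosing $B$ large, since $q/2-1>0$.

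Combining, for any $\epsilon,\eta>0$, I would first pick $B$ large enough (depending only on $\epsilon$, $\eta$, and the $\mathcal{D}$-parameters) to push the tail contribution below $\eta/2$, then let $p\to\infty$ so that the truncated-piece Chebyshev bound falls below $\eta/2$. The main obstacle is precisely the gap between the available moment assumption $q>2$ and the fourth moment that a naive variance computation of $S_p$ would demand; the truncation above is the standard device for bridging this gap in $M$-dependent sums.
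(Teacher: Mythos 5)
Your argument is correct. The core strategy coincides with the paper's: both proofs circumvent the missing fourth moment (only $q>2$ is assumed) by truncating the summands $(X_i-m_{p,i})^2$, using Lemma \ref{lemmaGmedComponentsBounded} to get the uniform $q/2$-moment bound, and then applying Chebyshev with the $M$-dependent covariance count $(2M+1)p$. The difference is in the truncation device. The paper truncates at the growing level $p$ via an indicator, $A_{p,i}=(X_i-m_{p,i})^2 1_{\{(X_i-m_{p,i})^2\le p\}}$, bounds $\pr{\|\bfX-\mathbf{m}_p\|^2\ne\sum_i A_{p,i}}$ by a union bound that vanishes since $q>2$, and separately verifies that the truncation bias $a_p/\ex{\|\bfX-\mathbf{m}_p\|^2}\to 1$; everything is then a single limit in $p$. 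You instead cap at a fixed level $B$ via $\min(\cdot,B)$, control the nonnegative remainder in $L^1$ by the tail estimate $\ex{(Z-B)_+}\le K/B^{q/2-1}$ plus Markov, and take a double limit (first $B$ large, then $p\to\infty$); this spares you the bias-correction step $a_p/\ex{S_p}\to 1$ at the cost of the $\epsilon$--$\eta$ bookkeeping. Both are standard and fully rigorous here; neither buys anything quantitatively over the other for a pure convergence-in-probability statement, though the paper's growing-threshold version would more readily yield an explicit rate in $p$ if one were needed.
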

\begin{proof}
    If $q\geq 4$, the claim can be shown using Chebychev's inequality. To allow $q>3$ in Theorem \ref{theoremComponentExpansion}, we instead employ a truncation approach similar to \citep[Sect. 2.2.3]{DurrettProbabilityBook}. For $1\leq i\leq p$, define $A_{p,i}:= (X_i - m_{p,i})^2 \cdot 1_{\{(X_i - m_{p,i})^2 \leq p\}}$ and set $a_p := \ex{\sum_{i=1}^p A_{p,i}}$.

    First, note by the triangle inequality, convexity of $x\mapsto x^q$, and Lemma \ref{lemmaGmedComponentsBounded} that \begin{equation}\label{eq:proof_conv_prob0}
        \ex{|X_i - m_{p,i}|^q} \leq 2^q(\tfrac{1}{2}\ex{|X_i|^q} + \tfrac{1}{2}|m_{p,i}|^q) \leq 2^{q-1} (C + B_1^q) := \tilde{C}
    \end{equation} for $1\leq i \leq p$. Further, for $t>0$ by Markov's inequality,  \begin{multline}\label{eq:proof_conv_prob1}
        \pr{(X_i - m_{p,i})^2 > t} = \pr{|X_i - m_{p,i}|^q > t^{q/2}} \leq \frac{\ex{|X_i - m_{p,i}|^q}}{t^{q/2}} \leq \frac{\tilde{C}}{t^{q/2}}.
    \end{multline} Third, using $q>2$, we make use of the bound \begin{multline}\label{eq:proof_conv_prob2}
        \ex{(X_i - m_{p,i})^2 \cdot 1_{\{(X_i - m_{p,i})^2 > p\}}} = \int_0^p \pr{(X_i - m_{p,i})^2 > p}\dd{\tau} + \\
        \int_p^\infty \pr{(X_i - m_{p,i})^2 > \tau}\dd{\tau} \overset{\eqref{eq:proof_conv_prob1}}{\leq} p^{-q/2+1} \tilde{C} + \int_p^\infty \frac{\tilde{C}}{\tau^{q/2}} \dd{\tau} \\= 
        p^{-q/2+1} \tilde{C} + \frac{1}{\tfrac{q}{2} - 1} p^{-q/2 + 1}\tilde{C} = p^{-q/2 + 1} \left(\tilde{C} + \frac{\tilde{C}}{q/2-1}\right).
    \end{multline}
    To prove convergence in probability, let $t>0$. Then, \begin{multline}\label{eq:proof_conv_prob3}
        \pr{\left|\frac{\|\bfX - \mathbf{m}_p\|^2}{a_p} - 1\right| > t} = \pr{\left|\|\bfX - \mathbf{m}_p\|^2 - a_p\right| > a_p t} \leq \\
        \pr{\|\bfX - \mathbf{m}_p\|^2 \neq \sum_{i=1}^p A_{p,i}} + \pr{\left| \sum_{i=1}^p A_{p,i} - a_p\right| > a_p t}.
    \end{multline}
    We have by inclusion of events that by $q>2$, \begin{multline}\label{eq:proof_conv_prob4}
        \pr{\|\bfX - \mathbf{m}_p\|^2 \neq \sum_{i=1}^p A_{p,i}} \leq \pr{\bigcup_{i=1}^p \{(X_i - m_{p,i})^2 > p\}} \\\overset{\eqref{eq:proof_conv_prob1}}{\leq}
        \sum_{i=1}^p \frac{\tilde{C}}{p^{q/2}} = \tilde{C}p^{-q/2 + 1} \xrightarrow{p\to\infty} 0.
    \end{multline}We treat the second term with Chebycheff's inequality. First, when $q\geq 4$, $\ex{A_{p,i}^2} \leq \ex{(X_i - m_{p,i})^4}\leq \tilde{C}^{4/q} \leq \tilde{C}$ since $\tilde{C}\geq C\geq 1$ and using Jensen's inequality. If $q < 4$, \begin{multline*}
        \ex{A_{p,i}^2} = \ex{((X_i - m_{p,i})^2)^{2-q/2}|X_i - m_{p,i}|^{q}1_{\{(X_i - m_{p,i})^2 \leq p\}}} \\\leq
        p^{2 - q/2} \ex{|X_i - m_{p,i}|^{q}} \leq p^{2 - q/2} \tilde{C}.
    \end{multline*} Thus, in any case $\var{A_{p,i}} \leq \ex{A_{p,i}^2} \leq \tilde{C}\cdot\max(1, p^{2-q/2})$.

    Let $i\in\mathbb{N}$. By $M$-dependence, $\Cov(A_{p,i}, A_{p,j})=0$ for all but $2M+1$ indices $j$. This fact together with $|\Cov(X,Y)|\leq \sqrt{\var{X}\var{Y}}$ gives: \begin{equation*}
        \left|\sum_{j=1}^p \Cov(A_{p,i}, A_{p,j})\right| \leq (2M+1)\cdot \tilde{C}\cdot\max(1, p^{2-q/2}).
    \end{equation*} Thus: \begin{equation*}
        \ex{\left(\sum_{i=1}^p A_{p,i} - a_p\right)^2} = \var{\sum_{i=1}^p A_{p,i}} \leq p \cdot(2M+1)\cdot \tilde{C}\cdot\max(1, p^{2-q/2})
    \end{equation*} Hence, by Chebycheff's inequality, \begin{equation*}
        \pr{\left| \sum_{i=1}^p A_{p,i} - a_p\right| > a_p t} \leq \frac{p \cdot(2M+1)\cdot \tilde{C}\cdot\max(1, p^{2-q/2})}{t^2 a_p^2}.
    \end{equation*} Note that \begin{multline*}
        a_p = \ex{\|\bfX - \mathbf{m}_p\|^2} - \sum_{i=1}^p \ex{(X_i - m_{p,i})^2 \cdot 1_{\{(X_i - m_{p,i})^2 > p\}}} \\\overset{\eqref{eq:proof_conv_prob2}}{\geq}
        p\,\sigmamin^2 - p^{-q/2 + 2}\left(\tilde{C} + \frac{\tilde{C}}{q/2-1}\right) \overset{\text{p large enough}}{\geq} p \frac{\sigmamin^2}{2}.
    \end{multline*} Thus, for $p$ large enough, \begin{equation}\label{eq:proof_conv_prob5}
        \pr{\left| \sum_{i=1}^p A_{p,i} - a_p\right| > a_p t} \leq \frac{1}{t^2}\cdot \frac{4(2M+1)\tilde{C}}{\sigmamin^4} \cdot\max\left(\frac{1}{p}, p^{1 - q/2}\right)\xrightarrow{p\to\infty} 0.
    \end{equation} Combining \eqref{eq:proof_conv_prob5} with \eqref{eq:proof_conv_prob4} in \eqref{eq:proof_conv_prob3}, we have shown that\begin{equation}\label{eq:proof_conv_prob6}
        \frac{\|\bfX - \mathbf{m}_p\|^2}{a_p} \xrightarrow{P} 1
    \end{equation} in probability as $p\to\infty$.
    Further, note that \begin{equation*}
        \frac{\sum_{i=1}^p \ex{(X_i - m_{p,i})^2 \cdot 1_{\{(X_i - m_{p,i})^2 > p\}}}}{\ex{\|\bfX - \mathbf{m}_p\|^2}} \leq \frac{p^{2 - q/2}}{p\,\sigmamin^2} \left(\tilde{C} + \frac{\tilde{C}}{q/2-1}\right) \xrightarrow{p\to\infty} 0,
    \end{equation*} which implies that\begin{equation*}
        \frac{a_p}{\ex{\|\bfX - \mathbf{m}_p\|^2}} = 1 - \frac{\sum_{i=1}^p \ex{(X_i - m_{p,i})^2 \cdot 1_{\{(X_i - m_{p,i})^2 > p\}}}}{\ex{\|\bfX - \mathbf{m}_p\|^2}} \xrightarrow{p\to\infty} 1.
    \end{equation*} Combining this with \eqref{eq:proof_conv_prob6} yields the claim by Slutsky's theorem.
\end{proof}

\section{Further analysis of the simulation results}
Linear Regression slopes for the simulation curves can be found in Table \ref{tab:slopes}. The Pareto(3.1) results are analyzed in more detail in Figure \ref{fig:Pareto3Explanation}.
\begin{table}[h]
    \centering
    \begin{tabular}{ccccc}
          & Exp[0.75, 1.25] & MA(2) & Pareto(3.1) & Pareto[7,8]  \\\hline
        Slope of $\|\hat{\mathbf{m}}_p - \bs{\mu}\|$ & $-0.49$ & $-0.51$ & $-0.28$ & $-0.45$\\
        Slope of Residual & $-1.45$ & $-1.64$ & $-1.03$ & $-1.67$
    \end{tabular}
    \caption{Linear regression slopes to the simulation curves after $p\geq 100$}
    \label{tab:slopes}
\end{table}

\begin{figure}
    \centering
    \begin{subfigure}[b]{0.45\textwidth}
         \centering
         \includegraphics[width=\textwidth]{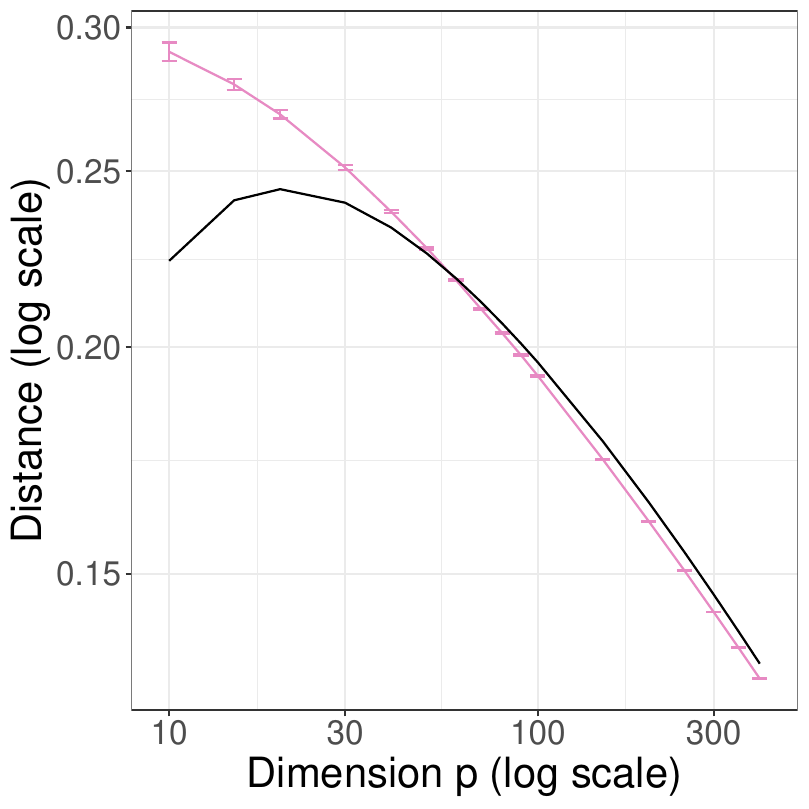}
     \end{subfigure}
     \hfill
     \begin{subfigure}[b]{0.45\textwidth}
         \centering
         \includegraphics[width=\textwidth]{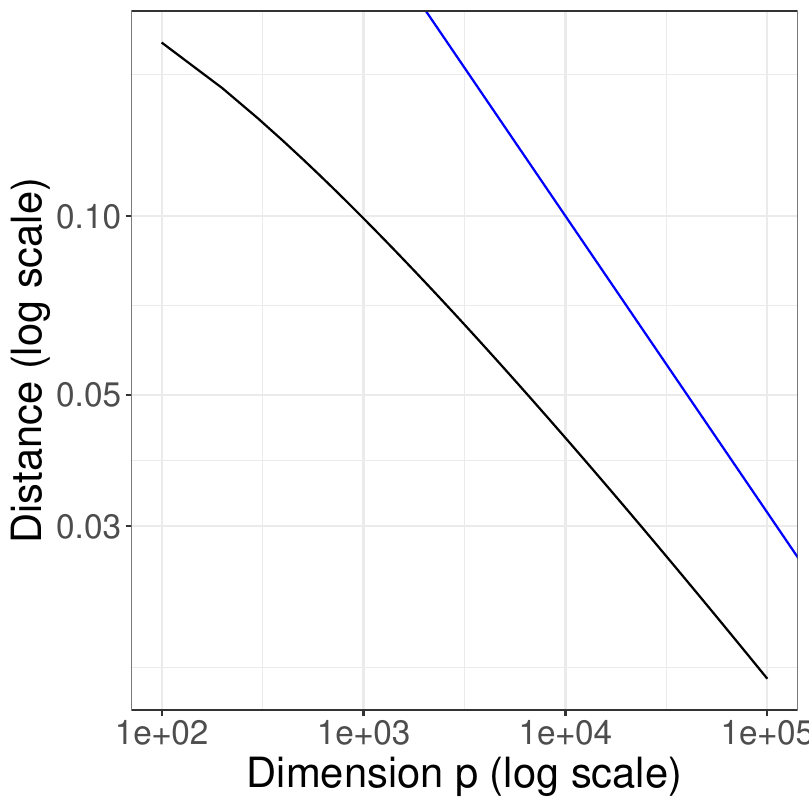}
     \end{subfigure}
    \caption{A closer look at the Pareto(3.1) simulations. Left: $\|\hat{\mathbf{m}}_p - \bs{\mu}\|$ (same as left side of Figure \ref{fig:Simulation}) in pink versus $f(p):=\sqrt{p}\cdot|\tfrac{-19.52}{p} + \tfrac{20.16}{p^{1.03}}|$ in black. Indeed, $f$ explains the decay of $\|\hat{\mathbf{m}}_p - \bs{\mu}\|$ well. Right: Plot $f(p)$ from $100$ to $10^5$ in black and compare with $\mathcal{O}(\tfrac{1}{\sqrt{p}})$ in blue. Even for $p$ so large, $f$ is still not fully parallel to $\mathcal{O}(\tfrac{1}{\sqrt{p}})$.}
    \label{fig:Pareto3Explanation}
\end{figure}

\end{document}